\theoremstyle{plain}
\newtheorem{theorem}{Theorem}[section]
\newtheorem{lemma}[theorem]{Lemma}
\newtheorem{question}[theorem]{Question}
\theoremstyle{definition}
\theoremstyle{remark}
\newcommand{\drawClaw}[5]{
\begin{tikzpicture}[scale=.3, every node/.style={scale=.3}]
    \node[fill=#1] (v1) at (1.4,1.5) [circle,draw] {};
    \node[fill=#2] (v2) at (1.4,2.5) [circle,draw] {};
    \node[fill=#3] (v3) at (2,2) [circle,draw] {};
    \node[fill=#4] (v4) at (3,2) [circle,draw] {};
    \node[fill=#5] (v5) at (4,2) [circle,draw] {};
    \draw (v1) -- (v3);
    \draw (v2) -- (v3);
    \draw (v3) -- (v4);
    \draw (v4) -- (v5);
\end{tikzpicture}}
\title{Reversible peg solitaire on graphs}
\author{John Engbers \and Christopher Stocker}
\date{\today\thanks{\{john.engbers, christopher.stocker\}@marquette.edu; Department of Mathematics, Statistics and Computer Science, Marquette University, Milwaukee, WI 53201} }
\begin{document}

\maketitle

\begin{abstract}
The game of peg solitaire on graphs was introduced by Beeler and Hoilman in 2011.  In this game, pegs are initially placed on all but one vertex of a graph $G$.  If $xyz$ forms a path in $G$ and there are pegs on vertices $x$ and $y$ but not $z$, then a {\em jump} places a peg on $z$ and removes the pegs from $x$ and $y$.  A graph is called solvable if, for some configuration of pegs occupying all but one vertex, some sequence of jumps leaves a single peg. We study the game of {\em reversible peg solitaire}, where there are again initially pegs on all but one vertex, but now both jumps and unjumps (the reversal of a jump) are allowed. We show that in this game all non-star graphs that contain a vertex of degree at least three are solvable, that cycles and paths on $n$ vertices, where $n$ is divisible by $2$ or $3$, are solvable, and that all other graphs are not solvable. We also classify the possible starting hole and ending peg positions for solvable graphs.
\end{abstract}

\section{Introduction}

Peg solitaire is a game on geometric boards that has been recently generalized to connected simple graphs by Beeler and Hoilman \cite{BeelerHoilman1}.  In the game of peg solitaire on graphs, all vertices but one start occupied by a \emph{peg} (the vertex without a peg is said to have a \emph{hole}).  If $x$ and $y$ are adjacent and $y$ and $z$ are adjacent with pegs on $x$ and $y$ and a hole on $z$, then the legal move is to \emph{jump} the peg on $x$ over the peg on $y$ into the hole on $z$ while removing the peg on $y$.  See Figure \ref{fig-peg solitaire}.

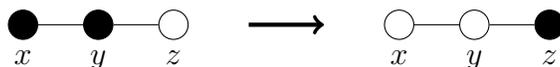
\begin{figure}[ht!]
\centering
\begin{tikzpicture}
    \node (v1) at (1,1) [circle,draw,fill,label=270:$x$] {};
    \node (v2) at (2,1) [circle,draw,fill,label=270:$y$] {};
    \node (v3) at (3,1) [circle,draw,label=270:$z$] {};
    \node (v4) at (6,1) [circle,draw,label=270:$x$] {};
    \node (v5) at (7,1) [circle,draw,label=270:$y$] {};
    \node (v6) at (8,1) [circle,draw,fill,label=270:$z$] {};
    
    \draw [->,ultra thick] (4,1) -- (5,1);
    
    \draw (v1) -- (v2);
    \draw (v2) -- (v3);
    \draw (v4) -- (v5);
    \draw (v5) -- (v6);
\end{tikzpicture}
\caption{A jump in peg solitaire.}
\label{fig-peg solitaire}
\end{figure}

The purpose of the game is to reduce the total number of pegs on the vertices to one.  If this is achieved from some starting configuration with exactly one hole the graph is said to be \emph{solvable}.  If it is achievable from every starting configuration with exactly one hole the graph is said to be \emph{freely solvable}. 
Clearly, solvability requires the graph to be connected. Several results on which graphs are freely solvable, which are solvable but not freely solvable, and which are not solvable are given in \cite{BeelerHoilman1,BeelerHoilmanWindmillStar,Walvoort}. As a sample result, in \cite{BeelerHoilmanWindmillStar} the game is played on the double star $DS(L,R)$ (with $L \geq R$), which is the graph consisting of a fixed edge $uv$ that has $L$ pendant edges joined to $u$ and $R$ pendant edges joined to $v$. Beeler and Hoilman show that $DS(L,R)$ is freely solvable if and only if $L=R$ and $R \neq 1$, and 
solvable if and only if $L \leq R+1$. Fully characterizing the connected graphs $G$ that are freely solvable or solvable for peg solitaire on graphs seems to be a difficult question. It is also worth mentioning here that there are many interesting results and techniques related to traditional peg solitaire on geometric boards, see \cite{Beasley,WinningWays,DezaOnn}.  

Variations of peg solitaire on graphs have recently been introduced.  One such variation, called fool's solitaire, asks for the maximum number of pegs that can be left on vertices where no possible jumps remain, see  \cite{BeelerRodriguez,LoebWise}. In this paper, we introduce another variation, which is mentioned with regards to algebraic techniques used to study traditional peg solitaire in \cite{WinningWays}.  We consider the game of {\em reversible peg solitaire}, which allows not only jumps but also the additional move of an \emph{unjump}, which is the reversal of a jump.  Specifically, if $x$ and $y$ are adjacent and $y$ and $z$ are adjacent with holes on $x$ and $y$ and a peg on $z$, then a second legal move is to unjump the peg from $z$ to $x$, creating a peg on $y$.  We can also view this as allowing the hole on $x$ to jump over the hole on $y$, creating a hole on $z$ (but also creating pegs on $x$ and $y$).  See Figure \ref{fig-peg solitaire2}.

\begin{figure}[ht!]
\centering
\begin{tikzpicture}
    \node (v1) at (6,1) [circle,draw,fill,label=270:$x$] {};
    \node (v2) at (7,1) [circle,draw,fill,label=270:$y$] {};
    \node (v3) at (8,1) [circle,draw,label=270:$z$] {};
    \node (v4) at (1,1) [circle,draw,label=270:$x$] {};
    \node (v5) at (2,1) [circle,draw,label=270:$y$] {};
    \node (v6) at (3,1) [circle,draw,fill,label=270:$z$] {};
    
    \draw [->,ultra thick] (4,1) -- (5,1);
    
    \draw (v1) -- (v2);
    \draw (v2) -- (v3);
    \draw (v4) -- (v5);
    \draw (v5) -- (v6);
\end{tikzpicture}
\caption{An unjump in reversible peg solitaire.}
\label{fig-peg solitaire2}
\end{figure}
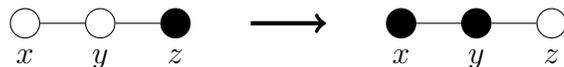

Notice that reversible peg solitaire may also be viewed as a restricted version of Lights Out on graphs, a game where a closed neighborhood may flip all states (here pegs/holes).  In this formulation, we are allowed to flip the states of all vertices in a path on three vertices (instead of an entire closed neighborhood) if the vertices are in one of two starting configurations. In particular, any path on three vertices whose endpoints contain exactly one peg may be flipped. For a survey of Lights Out, see \cite{FleischerYu}.

Following the terminology for peg solitaire, we say that a graph is {\em solvable} for reversible peg solitaire if for some starting configuration with exactly one hole, some combination of jumps and unjumps eventually produces a configuration with a single peg. If a graph is solvable for every starting configuration with exactly one hole, we say that the graph is \emph{freely solvable}. If we may freely choose the location of the final peg in a freely solvable graph, we say that the graph is \emph{doubly freely solvable}. Note that any graph that is doubly freely solvable is freely solvable, and any graph that is freely solvable is solvable.

For reversible peg solitaire, which graphs are solvable? Clearly anything (freely/doubly freely) solvable in peg solitaire (which uses only jumps) will also be (freely/doubly freely) solvable in reversible peg solitare, but some graphs that are not solvable in peg solitaire may now become solvable in reversible peg solitaire. Our first result shows, however, that not \emph{all} graphs are solvable in reversible peg solitare.

\begin{theorem} \label{thm-star}
For any $n \geq 4$, the star $K_{1,n-1}$ is not solvable.
\end{theorem}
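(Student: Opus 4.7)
The plan is to find a simple invariant preserved by both jumps and unjumps on $K_{1,n-1}$ and then rule out both possible starting hole positions with it.

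First I would observe the structural fact that in $K_{1,n-1}$ the only paths on three vertices are those of the form $(\text{leaf},\ \text{center},\ \text{leaf})$, since any two leaves are non-adjacent. Consequently every legal move, jump or unjump, must have the center as its middle vertex $y$. Writing $c \in \{0,1\}$ for the state of the center (peg or hole) and $p$ for the number of pegs on leaves, a jump requires $c = 1$ together with a leaf-peg $x$ and a leaf-hole $z$, and transforms the configuration by $c \mapsto 0$, peg on $x$ removed, peg on $z$ created; an unjump is the exact reversal. In both cases the leaf-peg count $p$ is unchanged: exactly one leaf gains a peg and one loses a peg, while the change happens at the center. Thus $p$ is a global invariant of play.

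Next I would split on the starting hole, of which there are essentially two types by the symmetry of leaves. If the hole is at the center, then $c = 0$ and $p = n-1 \geq 3$. No jump is legal (each requires $c = 1$) and no unjump is legal (each requires a leaf-hole, but all leaves are occupied), so the position is frozen and cannot be reduced to a single peg. If instead the hole is at a leaf, then $c = 1$ and $p = n - 2 \geq 2$; by the invariant, $p$ remains equal to $n-2 \geq 2$ throughout play, so any reachable configuration contains at least $n-2 \geq 2$ pegs on the leaves alone, and in particular cannot consist of a single peg.

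Combining the two cases shows that no starting configuration with a single hole can be reduced to a single peg, which is precisely the statement that $K_{1,n-1}$ is not solvable for $n \geq 4$. There is no real obstacle here: once one spots that every move is forced to use the center as its middle vertex, the leaf-peg count is immediately seen to be invariant, and the rest is arithmetic. The only mild care required is to verify the Case 1 position is truly stuck (i.e.\ that unjumps, which one might hope could increase the peg count, are also unavailable) — and this is immediate since the unjump move needs a leaf-hole that Case 1 does not provide.
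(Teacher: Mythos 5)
Your proof is correct and follows essentially the same route as the paper: both arguments observe that every legal move must use the center as the middle vertex, deduce that the leaf-peg count is invariant while the center toggles, and conclude that at least $n-2\geq 2$ pegs always remain. Your explicit case split on the starting hole (center versus leaf) just makes the paper's remark that ``to make an initial move the hole must start on a leaf'' slightly more detailed.
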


\begin{proof}
Notice that both jumps and unjumps on the star require there to be a peg on one leaf and a hole on a second leaf; this shows that to make an initial move the hole must start on a leaf.  Given this, each move will preserve the total number of pegs on the leaves and toggle the center between having a peg and being a hole.  Therefore the only possible configurations on $K_{1,n-1}$ have either $n-1$ or $n-2$ pegs, so for $n \geq 4$ there will never be a single peg remaining.
\end{proof}

As the following theorem shows, with unjumps allowed \emph{most} graphs are freely solvable. 

\begin{theorem}\label{thm-main}
Let $G$ be a connected graph on $n$ vertices.  If $G \neq K_{1,n-1}$ and $G$ has a vertex of degree at least $3$, then $G$ is freely solvable. Furthermore $G$ is doubly freely solvable if and only if there is a path joining two vertices of degree at least $3$ whose length is not divisible by $3$.  

\end{theorem}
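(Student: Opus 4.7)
The plan is to prove the two halves of Theorem \ref{thm-main} separately.

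For free solvability, I would give a constructive procedure that, from any one-hole configuration, reaches a single peg. First I would build a small toolkit of local gadgets at a vertex $v$ of degree at least $3$: the central one is a \emph{claw reduction} converting pegs on three neighbors of $v$ with a hole at $v$ into a single peg at $v$ via two jumps, and unjumps provide the inverse operation so that pegs can be \emph{inserted} onto empty arms to stage jumps. Since $G$ is connected, non-star, and has a vertex of degree at least $3$, $G$ contains either a second degree-$\geq 3$ vertex or an arm of $v$ of length at least $2$; in either case this provides room to migrate the initial hole near $v$ using unjumps. The overall strategy is then to iteratively clear an arm of $G$ by staging pegs at its tip, collapsing back toward $v$, and recursing until a single peg remains. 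The main obstacle will be the base cases of small spider graphs with one length-$2$ arm and several length-$1$ legs, where the reductions must be given as explicit sequences of jumps and unjumps.

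For the doubly-free direction, necessity uses an $\mathbb{F}_2$-valued invariant. Assume every path between two degree-$\geq 3$ vertices has length divisible by $3$. Define $\phi : V(G) \to \mathbb{F}_2$ by setting $\phi(v) = 0$ at every degree-$\geq 3$ vertex and propagating along each maximal degree-$2$ path by the recurrence $\phi(z) = \phi(x) + \phi(y)$ for consecutive $x, y, z$; the divisibility hypothesis guarantees the recurrence is consistent where a path joins two degree-$\geq 3$ vertices, and one checks that $\phi$ equals $0$ on vertices at positions $\equiv 0 \pmod 3$ in each leg and $1$ elsewhere. By construction $\phi(x) + \phi(y) + \phi(z) \equiv 0 \pmod 2$ for every path $x$-$y$-$z$ of length $2$ in $G$, and hence $S := \sum_v \phi(v) \cdot \mathbf{1}[v \text{ has a peg}] \pmod 2$ is preserved by every jump and unjump. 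Comparing $S$ at the start and end yields the constraint $\phi(h) + \phi(p) \equiv \sum_v \phi(v) \pmod 2$ relating the starting hole $h$ to the ending peg $p$. Since $G$ is not a star, $\phi$ takes both values, so half of the $(h, p)$ pairs violate this constraint, ruling out double free solvability.

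For sufficiency, let $P$ be a path of length $k \not\equiv 0 \pmod 3$ between two degree-$\geq 3$ vertices $u, w$. Starting from any one-peg configuration guaranteed by the freely solvable half, I would construct a \emph{slide gadget}: a local sequence of jumps and unjumps, using $u$ and $w$ as pivots, that translates the final peg by one step along $P$. Iterating the slide hits every residue class modulo $3$ on $P$ because $\gcd(k, 3) = 1$, and the pivots let the peg be dispatched onto any branch of $G$, so every vertex is reachable. The hardest steps will be constructing the slide gadget explicitly without disturbing the rest of the board, and carefully checking the modular consistency of $\phi$ at each branching vertex in the necessity argument, which is precisely where the path-length hypothesis is doing all of the work.
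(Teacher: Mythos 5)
Your plan for the doubly-free half is essentially the paper's: the $\mathbb{F}_2$ weighting with $\phi=0$ at degree-$\geq 3$ vertices and the pattern $0,1,1$ propagated along legs is exactly the invariant used for necessity (every $P_3$ carries exactly two weight-$1$ vertices, so jumps and unjumps preserve the parity of the total weight of pegged vertices), and the sufficiency idea of shifting the final peg by an amount coprime to $3$ along the path $P$ matches the paper's use of two copies of the subdivided claw at the endpoints of $P$. The sufficiency gadget is left unconstructed in your sketch, but the idea is sound and is realized in the paper by combining $P_4$--moves with the internal equivalence of the single-peg configurations $d$ and $e$ in the far copy of the subdivided claw.

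The free-solvability half, however, contains a genuine error: your central \emph{claw reduction} does not exist. In the configuration with a hole at $v$ and pegs on three neighbors of $v$, no jump or unjump is legal within the claw --- every path of length $2$ through $v$ has its hole in the middle, whereas a jump needs the hole at an endpoint and an unjump needs two holes including the middle. This frozen configuration is precisely the reason the star $K_{1,n-1}$ is \emph{not} solvable (Theorem \ref{thm-star}), and it appears in Lemma \ref{lem-classes} as one of the four isolated configurations ($abd$) on the subdivided claw $H$. The paper avoids this trap by working not with the claw but with the claw with one subdivided edge, computing the full equivalence-class structure of its $32$ configurations (two large classes A and B, each containing single-peg configurations, plus four frozen ones), and then showing that a nearest outside peg can always be absorbed into $H$ while keeping $H$ in Class A or B. Your recursive arm-clearing strategy, resting on an illegal move and deferring all of the small spider base cases, would need to be rebuilt around some such invariant that certifies the process never strands the board in a dead configuration; as written it does not establish free solvability.
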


In the proof of Theorem \ref{thm-main}, which is given in Section \ref{sec-proof}, we provide all possible starting hole and ending peg positions for all freely solvable but not doubly freely solvable graphs containing a vertex of degree at least $3$. 

The only connected graphs that are not covered by Theorems \ref{thm-star} and \ref{thm-main} are paths and cycles. 
Let $P_n$ and $C_n$ denote a path and a cycle on $n$ vertices, respectively. 
We also have the following, which we prove in Section \ref{sec-proof}.

\begin{theorem}\label{thm-paths and cycles}
Let $n \geq 2$ be an integer. 
\begin{enumerate}
	\item If $n$ is not divisible by $2$ or $3$, then $P_n$ and $C_n$ are not solvable. 
	\item If $n$ is divisible by $3$, then $P_n$ is solvable but not freely solvable, and $C_n$ is freely solvable but not doubly freely solvable.  
	\item If $n$ is not divisible by $3$ but is divisible by $2$, then $P_n$ is solvable but not freely solvable, and $C_n$ is doubly freely solvable
\end{enumerate}

\end{theorem}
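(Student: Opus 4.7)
The plan is to split into three cases by $n \bmod 6$, proving solvability via explicit move sequences and non-solvability via invariants. The central tool is a mod-2 invariant: if $w : V(G) \to \mathbb{F}_2$ satisfies $w(v_{i+2}) \equiv w(v_i) + w(v_{i+1}) \pmod 2$ on every triple $v_i v_{i+1} v_{i+2}$ of consecutive vertices, then $\sum_{v \text{ has peg}} w(v) \pmod 2$ is preserved by every jump and unjump, because both operations XOR the pattern $(1,1,1)$ onto the three affected vertex states. On $P_n$ the space of such $w$ is two-dimensional over $\mathbb{F}_2$ (the Fibonacci-like sequences mod 2 have period 3, and the three cyclic shifts of $1,1,0$ sum to zero), giving two independent invariants; on $C_n$ a compatible $w$ exists only when $3 \mid n$, in which case there are again two invariants.

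For Case (i), $n \equiv 1, 5 \pmod 6$: on $P_n$, the two invariants restrict the allowed pairs $(h, p)$ of starting hole and ending peg, but do not suffice to eliminate all of them; the remaining cases I plan to dispose of by observing that moves at the triples through $v_1$ or $v_n$ behave asymmetrically (they flip one endpoint state and change the boundary-edge parity by one), yielding an auxiliary parity invariant sufficient to rule out each remaining $(h,p)$. On $C_n$ no mod-2 invariant applies, so I would argue via the move-equivariant complement map $x \mapsto 1-x$ together with an inductive reduction: a putative solution on $C_n$ would splice to a solution on $C_{n-6}$, eventually contradicting the base cases $C_5$ and $C_7$ (verified directly by enumerating the reachable component from an $(n-1)$-peg start and checking no single-peg configuration lies in it).

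For Cases (ii) and (iii), solvability follows by induction on $n$, reducing $P_n$ to $P_{n-3}$ or $P_{n-2}$ via a short fixed sequence of jumps and unjumps that clears a block of consecutive pegs at one end, anchored by base cases $P_3$ (one jump $v_3 v_2 \to v_1$ from a hole at an endpoint) and $P_4$ (shown by a short direct sequence from a hole at $v_2$). Invariants then identify which starting holes fail, giving ``solvable but not freely solvable'' for $P_n$. For $C_n$ in Case (ii), cyclic symmetry upgrades solvability to free solvability, while the two cycle invariants constrain the ending peg, precluding doubly free solvability; in Case (iii) no cycle invariants apply, so every pair of starting hole and ending peg is realizable by a direct construction, yielding doubly free solvability.

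The main obstacle will be the non-solvability of $C_n$ in Case (i): without a linear invariant, one must argue structurally, and the inductive splicing step needs careful verification that the local pattern constraints for jumps and unjumps translate correctly between $C_n$ and $C_{n-6}$ under the splice.
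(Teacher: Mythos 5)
Your proposal has a genuine gap at its core: the $\mathbb{F}_2$ invariants you build (the period-3 Fibonacci-type weights) are provably \emph{all} of the additive invariants of this game, and they are not strong enough to carry the non-solvability claims. Indeed, if a weight function $w$ with values in any abelian group is preserved by both the jump at a triple (which requires $w(z)=w(x)+w(y)$) and the reverse jump (which requires $w(x)=w(y)+w(z)$), then $2w(y)=0$, so every additive invariant factors through your two mod-2 ones. Concretely, on $P_7$ the difference between the configuration ``hole at vertex $3$'' and ``single peg at vertex $2$'' is orthogonal to both weight patterns, hence lies in the $\mathbb{F}_2$-span of the move vectors; no linear invariant, and no ``auxiliary parity invariant'' of the boundary type you sketch, can separate them. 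The same problem blocks your classification in case (ii): for $P_{6}$ the mod-2 invariants only force $h\equiv p \pmod 3$, which rules out no starting hole at all, so you cannot conclude ``not freely solvable'' for $P_{3\ell}$ this way. The paper escapes this by using a genuinely \emph{non-commutative} invariant: vertices are weighted by $i,j,k\in Q_8$ according to their residue mod $3$, and the invariant is the ordered product of the weights of the occupied vertices. Because $ij=k$ but $ji=-k$, this invariant sees signs that every abelian invariant misses, and it immediately kills $P_{6\ell\pm1}$ and pins down the exact starting-hole/ending-peg residues in the other cases.

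Your treatment of $C_n$ for $n\equiv\pm1\pmod 6$ is also not a proof: the proposed splice of a solution on $C_n$ down to $C_{n-6}$ is unjustified (move sequences can repeatedly cross any six vertices you would excise), and you flag this yourself as the main obstacle. The paper's device is to lift each move on $C_n$ to three translated copies of itself on $C_{3n}$ (translations by $n$), where the quaternion product weight is again preserved; an all-but-one-peg start on $C_{3n}$ has weight $\pm1$ while a lifted single-peg end has weight $\mp1$, giving the contradiction. On the positive side, your solvability constructions for cases (ii) and (iii) and your use of the mod-2 invariants to show $p\equiv h\pmod 3$ on $C_{3m}$ (hence not doubly freely solvable) are sound and match the paper in spirit; the missing ingredient throughout is the non-abelian weighting.
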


In the proof of Theorem \ref{thm-paths and cycles}, which is given in Section \ref{sec-proof}, we provide all possible starting hole and ending peg positions for all solvable paths and cycles.  



We also mention two natural questions. 
Any graph that is solvable in peg solitaire on graphs is also solvable in reversible peg solitaire on graphs, and in particular can be solved with zero unjumps.  Suppose that we let $k$ count the minimum number of unjumps needed to solve a graph $G$ in reversible peg solitaire. There are two natural questions associated with this parameter $k$.  
\begin{question}
Given a graph $G$ that is solvable in reversible peg solitaire, what is the minimum number of unjumps necessary to solve $G$? 
\end{question}

\begin{question}
For a fixed $k$, which graphs are solvable with at most $k$ unjumps? 
\end{question}

The proof of Theorem \ref{thm-main} uses at most $cn^2$ unjumps to solve a connected non-star graph containing a vertex of degree at least 3.

\section{Proofs}\label{sec-proof}

In this section we will first prove that if $G \neq K_{1,n-1}$ is a graph that contains a vertex of degree at least $3$, then $G$ is solvable. If $n = 4$, then since there is a vertex of degree at least $3$ and $G \neq K_{1,3}$, there must exist a triangle with a pendant edge as a (not necessarily induced) subgraph.  Using only the edges on this subgraph, $G$ is solvable by inspection.  Therefore we assume that $n \geq 5$; note that we may also assume that $G$ is a non-star tree with a vertex of degree at least $3$.

The main idea of the proof is to analyze the configurations of pegs on a graph $H$, where $H$ is a claw $K_{1,3}$ with one subdivided edge (see Figure \ref{fig-subdivided claw}). Notice that any connected non-star graph with $n \geq 5$ and a vertex of degree at least $3$ has $H$ as a (not necessarily induced) subgraph.  Using these configurations, we show how to iteratively bring pegs from outside $H$ into $H$ and remove them, which eventually removes all pegs but one.

\begin{figure}[ht!]
\centering
\begin{tikzpicture}[scale=1.2]
    \node (v1) at (1.4,1.35) [circle,draw,label=270:$a$] {};
    \node (v2) at (1.4,2.65) [circle,draw,label=90:$b$] {};
    \node (v3) at (2,2) [circle,draw,label=270:$c$] {};
    \node (v4) at (3,2) [circle,draw,label=270:$d$] {};
    \node (v5) at (4,2) [circle,draw,label=270:$e$] {};
    
    \draw (v1) -- (v3);
    \draw (v2) -- (v3);
    \draw (v3) -- (v4);
    \draw (v4) -- (v5);
\end{tikzpicture}
\caption{The graph $H$, which is a claw with one subdivided edge.}
\label{fig-subdivided claw}
\end{figure}
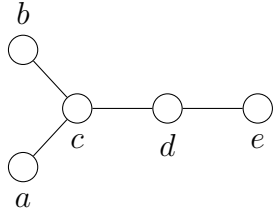

In the written notation for a configuration of pegs on $H$, we will let a letter indicate the presence of a peg and the absence of a letter indicate a hole; for example, $ace$ indicates pegs on $a$, $c$, and $e$ and holes on $b$ and $d$. 
Two configurations on $H$ are \emph{equivalent} if we may move from one configuration to the other through a series of jumps and unjumps within $H$. We now separate out the configurations on $H$ that are equivalent to a configuration on $H$ that contain a single peg. 

\begin{lemma}\label{lem-classes} 
The rows in the following table represent two equivalence classes of configurations on $H$. 
The four unlisted configurations contain no jumps or unjumps within $H$.
\begin{center}
\begin{tabular}{c|l}
{\rm Class A} \,&\, a, b, d, e, ac, bc, cd, abe, ade, bde, abcd, abce, acde, bcde\\
\hline
{\rm Class B} \,&\, c, ab, ad, ae, bd, be, de, abc, acd, ace, bcd,  bce, cde, abde\\
\end{tabular}
\end{center}
\end{lemma}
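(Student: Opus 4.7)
The plan has three steps: first, identify the configurations on $H$ that admit no legal move; second, exhibit a parity invariant that separates the two proposed classes; third, verify that each class is internally connected by exhibiting short move sequences between its members.

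For the first step, note that the four paths of length two in $H$ are $acb$, $acd$, $bcd$, and $cde$. On any such path $xyz$, a move (jump or unjump) is available precisely when the two endpoints $x$ and $z$ are in opposite states; the middle vertex then automatically determines whether the legal move is a jump (middle has a peg) or an unjump (middle has a hole). Requiring no moves on any of the four paths forces $\{a,b,d\}$ to share a common state and $\{c,e\}$ to share a common state, which yields exactly the four isolated configurations $\emptyset$, $ce$, $abd$, and $abcde$. The remaining $32 - 4 = 28$ configurations must then be partitioned.

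For the second step, I look for a $\mathbb{Z}/2$-valued invariant of the form $I(\sigma) = \sum_v \alpha_v \, \sigma_v \pmod 2$, where $\sigma_v \in \{0,1\}$ records whether $v$ has a peg. Since every jump or unjump on a path $xyz$ toggles the state of all three vertices, invariance forces $\alpha_x + \alpha_y + \alpha_z \equiv 0 \pmod 2$ on each of the four path triples. Solving this small linear system over $\mathbb{F}_2$ shows the nontrivial solution is $\alpha_c = 0$ and $\alpha_a = \alpha_b = \alpha_d = \alpha_e = 1$, so $I$ is the parity of the number of pegs on $\{a,b,d,e\}$. A direct table check confirms $I = 1$ on every configuration of Class A and $I = 0$ on every configuration of Class B, so the two classes lie in distinct equivalence classes and cannot be mixed by any sequence of moves.

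For the third step, I would pick a canonical representative in each class, say $a$ for Class A and $c$ for Class B, and for each of the other $13$ configurations in the class write out an explicit short move sequence reaching the representative. Because $H$ is small and moves are locally rich (every path of length two supports a move whenever its endpoints disagree), each such sequence can be found in a handful of moves, and the classes are small enough that a compact table or a few representative reductions (together with symmetry $a \leftrightarrow b$ and peg/hole duality) suffices. This last step is the most tedious but presents no conceptual obstacle; it is the bookkeeping that certifies the partition $28 = 14 + 14$ is precisely the one claimed.
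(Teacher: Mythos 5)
Your steps 1 and 2 are correct and in fact supply something the paper's own proof leaves implicit. The observation that a move exists on a path $xyz$ exactly when the endpoints $x$ and $z$ are in opposite states cleanly \emph{derives} the four isolated configurations ($\emptyset$, $ce$, $abd$, $abcde$) rather than merely asserting them, and your $\mathbb{F}_2$ invariant --- the parity of the number of pegs on $\{a,b,d,e\}$ --- is a genuine separation argument: the four path triples $acb$, $acd$, $bcd$, $cde$ force $\alpha_c=0$ and $\alpha_a=\alpha_b=\alpha_d=\alpha_e$, and the resulting parity is $1$ on all fourteen Class A configurations and $0$ on all fourteen Class B configurations. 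This proves that no sequence of moves can carry a Class A configuration to a Class B configuration, a point the paper never explicitly addresses (it only exhibits chains within each row), so this part of your proposal is a real improvement.

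The gap is step 3, which you defer as bookkeeping but which is precisely the substantive content of the lemma and the entirety of the paper's proof. Steps 1 and 2 show only that every equivalence class is contained in Class A, contained in Class B, or is one of the four singletons; they do not rule out Class A (or B) splitting into two or more equivalence classes. To certify that each row is a \emph{single} class you must actually exhibit the connecting moves. The paper does this with explicit chains, e.g.
\[
e \leftrightarrow cd \leftrightarrow a \leftrightarrow bc \leftrightarrow d \leftrightarrow ac \leftrightarrow ade \leftrightarrow bcde \leftrightarrow abe \leftrightarrow acde \leftrightarrow bde \leftrightarrow abce,
\]
supplemented by $b \leftrightarrow cd$ and $abcd \leftrightarrow abe$, and an analogous chain for Class B. Your suggested shortcuts are legitimate --- swapping $a$ and $b$ is a graph automorphism, and peg/hole complementation is an automorphism of the move graph because the availability condition ``endpoints disagree'' is complement-invariant and each row is closed under complementation --- and they would roughly halve the casework, but some explicit verification is unavoidable, and as written your proposal has not performed it. Until those sequences are written down, the claimed partition into exactly two classes of fourteen is not yet established.
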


\begin{proof}
Letting $x\leftrightarrow y$ indicate that configurations $x$ and $y$ differ by a single jump/unjump, we have:
\[
e \leftrightarrow cd \leftrightarrow a \leftrightarrow bc \leftrightarrow d \leftrightarrow ac \leftrightarrow ade \leftrightarrow bcde \leftrightarrow abe \leftrightarrow acde \leftrightarrow bde \leftrightarrow abce.
\]
Noting that also $b \leftrightarrow cd$ and $abcd \leftrightarrow abe$, this produces the configurations in Class A.  For Class B, we have
\[
c \leftrightarrow de \leftrightarrow bce \leftrightarrow ae \leftrightarrow acd \leftrightarrow ab \leftrightarrow bcd \leftrightarrow be \leftrightarrow ace.
\]
Noting that also $ad \leftrightarrow abc \leftrightarrow bd \leftrightarrow acd$, $cde \leftrightarrow ae$,  
 and $abde \leftrightarrow abc$, this produces the configurations in Class B.  The remaining four configurations ($abcde$, $abd$, $ce$, and the empty configuration) each have no possible jumps or unjumps within $H$.
\end{proof}

Next, we define a move that we will repeatedly use in our proof. 

\begin{eqnarray*}
\text{\textbf{$P_4$--Move:}} && \text{Given a $P_4$ that has one peg (hole, resp.) on an endpoint } \\
&&\text{and holes (pegs, resp.) on the other three vertices, we can }\\
&&\text{move the peg (hole, resp.) to the other endpoint.} \\ 
\end{eqnarray*}
\begin{proof}
Do an unjump (jump, resp.) to the two middle pegs, and then do a jump (unjump, resp.) to the other endpoint.
\end{proof}

Now we move on to the main proof.

\begin{proof}[Proof of Theorem \ref{thm-main}]

Let $G$ be a connected non-star graph with $n\geq 5$ and a vertex of degree at least $3$. We first show that $G$ is freely solvable, and discuss the necessary and sufficient conditions for $G$ to be doubly freely solvable at the end. 
We can find $H$ as a (not necessarily induced) subgraph of $G$; fix one such $H$ for the remainder of the freely solvable proof. Suppose also that there are are pegs on all but a single vertex in $G$.  If the hole starts outside of $H$, we can use the $P_4$--Move to shift it onto $H$. 
In particular, we now have a configuration on $H$ in Class A or Class B.  The following procedure will leave a single peg on a vertex in $G$.


Fix a peg outside of $H$ that is closest to $H$.  We move to an equivalent configuration on $H$ (within the same class, as defined in Lemma \ref{lem-classes}); the configuration chosen will depend on the distance from $H$ as well as the vertex in $H$ closest to the peg under consideration. Since this is a closest peg, we use the $P_4$--Move, if necessary, to move the peg within distance $3$ of $H$. See Figure \ref{fig-eliminate a}; in this case we consider the peg to be on either $x_1$, $x_2$, or $x_3$.

We now absorb the peg into $H$ while maintaining a configuration on $H$ in either Class A or Class B applying one of the following cases.  

\medskip

\textbf{Case 1: The peg is nearest to $a$.} See Figure \ref{fig-eliminate a}. (Notice that this is equivalent to the peg being nearest to $b$.)

\begin{figure}[ht]
\centering
\begin{tikzpicture}[scale=1]
    \node (v1) at (1.4,1.35) [circle,draw,label=270:$a$] {};
    \node (v2) at (1.4,2.65) [circle,draw,label=90:$b$] {};
    \node (v3) at (2,2) [circle,draw,label=270:$c$] {};
    \node (v4) at (3,2) [circle,draw,label=270:$d$] {};
    \node (v5) at (4,2) [circle,draw,label=270:$e$] {};
    
    \draw (v1) -- (v3);
    \draw (v2) -- (v3);
    \draw (v3) -- (v4);
    \draw (v4) -- (v5);
    
    \node (v6) at (0.4, 1.35) [circle,draw,label=270:$x_1$] {};
    \node (v7) at (-0.6, 1.35) [circle,draw,label=270:$x_2$] {};
    \node (v8) at (-1.6, 1.35) [circle,draw,label=270:$x_3$] {};
    \draw (v6) -- (v1);
    \draw (v7) -- (v6);
    \draw (v8) -- (v7);
\end{tikzpicture}
\caption{Case 1.}
\label{fig-eliminate a}
\end{figure}

If the configuration on $H$ is in Class A to start, we use configuration $b$ (\drawClaw{white}{black}{white}{white}{white}).  Using the $P_4$--Move we can move the peg to either $a$, $c$, or $d$, which puts us in either Class A or Class B.

If the configuration on $H$ is in Class B to start, we use configuration $de$ (\drawClaw{white}{white}{white}{black}{black}).  Using the $P_4$--Move we can move the peg to $a$, $b$, or $c$, which puts us in either Class A or Class B.

\medskip

\textbf{Case 2: The peg is nearest to $c$.} 

    %
    %

If the configuration on $H$ is in Class A to start, we use configuration $b$ (\drawClaw{white}{black}{white}{white}{white}).  Using the $P_4$--Move we can move the peg to $c$, $d$, or $e$, which puts us in either Class A or Class B.

If the configuration on $H$ is in Class B to start and we consider a peg on $x_1$ or $x_3$, then we use configuration $ab$ (\drawClaw{black}{black}{white}{white}{white}).  Using the $P_4$--Move we can move the peg to $c$ or $e$, which puts us in either Class A or Class B.

If instead the configuration on $H$ is in Class B to start and we consider a peg on $x_2$, then we use configuration $be$ (\drawClaw{white}{black}{white}{white}{black}).  Using the $P_4$--Move we can move the peg to $a$, which puts us in Class A.

\medskip

\textbf{Case 3: The peg is nearest to $d$.} 

    %
    %

If the configuration on $H$ is in Class A to start, we use configuration $b$ (\drawClaw{white}{black}{white}{white}{white}).  Using the $P_4$--Move we can move the peg to $a$, $c$, or $d$, which puts us in either Class A or Class B.

If the configuration on $H$ is in Class B to start, then we use configuration $be$ (\drawClaw{white}{black}{white}{white}{black}).  Using the $P_4$--Move we can move the peg to $a$, $c$, or $d$, which puts us in either Class A or Class B.

\medskip

\textbf{Case 4: The peg is nearest to $e$.} 

    %
    %

If the configuration on $H$ is in Class A to start, we use configuration $b$ (\drawClaw{white}{black}{white}{white}{white}).  Using the $P_4$--Move we can move the peg to $c$, $d$, or $e$, which puts us in either Class A or Class B.

If the configuration on $H$ is in Class B to start and we consider a peg on $x_1$ or $x_3$, then we use configuration $ab$ (\drawClaw{black}{black}{white}{white}{white}).  Using the $P_4$--Move we can move the peg to $c$ or $e$, which puts us in either Class A or Class B.

If instead the configuration on $H$ is in Class B to start and we consider a peg on $x_2$, then we use configuration $c$ (\drawClaw{white}{white}{black}{white}{white}).  Using the $P_4$--Move we can move the peg to $d$, which puts us in Class A.

\medskip  

Since each step reduces the number of pegs outside $H$ by one, after iterating $|V(G)|-5$ times the process terminates with $H$ in Class A or Class B and no pegs outside of $H$.  Since each of Class A and Class B contains a configuration with a single peg, the proof is complete.

\medskip

We next prove necessary and sufficient conditions for $G$ to be doubly freely solvable. First, suppose that all paths joining vertices of degree at least $3$ have length divisible by $3$. We use a weighting argument to show that $G$ is not doubly freely solvable. Choose a vertex $v$ of degree at least $3$, and assign a weight of $0$ to all vertices $w$ such that a path from $v$ to $w$ with length divisible by $3$ exists.  All other vertices are assigned a weight of $1$. 
This is well-defined by the assumption on the vertices of degree at least $3$; note that all vertices of degree at least $3$ receive weight $0$.  Then define the \emph{total weight of a configuration} to be the sum (mod $2$) of the weights on the vertices containing pegs (this is similar to a pagoda function defined in \cite{WinningWays}).  Since every path $P_3$ contains exactly two vertices with weight $1$, each jump and unjump preserves the total weight. This implies that initial configurations with total weight $0$ must end with a peg on a vertex having weight $0$, and initial configurations with total weight $1$ must end with a peg on a vertex having weight $1$.  The $P_4$--Move shows that a single peg on any weight $0$ vertex can be moved to any other weight $0$ vertex.  The $P_4$--Move also shows that a single peg on any weight $1$ vertex can be moved to either $a$, $b$, $d$, or $e$ in $H$, and since these configurations are equivalent in $H$, this shows that a single peg on any weight $1$ vertex can be moved to any other weight $1$ vertex.

Now suppose that there are two vertices $v_1$ and $v_2$ of degree at least $3$ that have a path $P$ between them of length not divisible by $3$. Consider the two possible copies of $H$, $H_1$ and $H_2$, with degree $3$ vertices $v_1$ and $v_2$ so that the respective $d$ and $e$ vertices (in the respective copies of $H$) lie on $P$; if $P$ only contains $v_1$ and $v_2$, then we require that the respective $e$ vertices lie on the respective copies of $H$. We have shown that any initial hole can be reduced to a single peg on a vertex in $H_1$. By the $P_4$--Move, a peg on $v_1$ in $H_1$ can be moved to a peg on either $d$ or $e$ in $H_2$. By Lemma \ref{lem-classes} this peg can be moved to either $e$ or $d$, respectively, in $H_2$, which by the $P_4$--Move again can be moved back to $H_1$ to a vertex other than $v_1$. This procedure is reversible, and so by using $P$ and $H_2$ we can move from Class A to Class B in $H_1$ when there is a single peg remaining in $G$. Since any vertex in $G$ has a path to a vertex in $H_1$ so that the length of the path is a multiple of $3$, we may use the $P_4$--Move to place the final peg on any vertex of $G$.
\end{proof}


\begin{proof}[Proof of Theorem \ref{thm-paths and cycles}]
It is shown in \cite{BeelerHoilman1} that $P_{2k}$ is solvable for peg solitaire (without unjumps) when the hole starts on a vertex adjacent to a leaf, so it remains solvable in reverse peg solitaire.  
For $P_{3\ell}$ with $\ell$ odd, let the vertices be $\{1,2,3,\ldots,3\ell-1,3\ell\}$ and start with the hole on vertex $3$.  Jump from $1$ into $3$, and then use the $P_4$--Move to shift the hole on vertex $2$ to vertex $3\ell -1$.  Then vertices $2$, $3$, $\ldots$, $3\ell-1$, and $3\ell$ form an even path where there is a single hole on a vertex adjacent to a leaf, which is solvable. Note that by reversing the roles of pegs and holes in each of these cases, we obtain possible starting hole and ending peg configurations for solvable paths.

Since $P_n$ is solvable in these cases and $P_n$ is a subgraph of $C_n$, $C_n$ is freely solvable in these cases.

We now fully classify paths and cycles using another weighting argument.  
We use, as weights, the elements the multiplicative quaternion group $Q_8$, which has presentation $$Q_8 = \langle -1,i,j,k | (-1)^2 = 1, i^2=j^2=k^2=ijk=-1 \rangle.$$ In particular, note that $ij=k$, $jk = i$, and $ki = j$.

For $P_n$ with vertices $\{1,2,3,\ldots,n\}$, assign weight $i$ to  vertex $x$ if $x \mod 3 = 1$, weight $j$ to vertex $x$  if $x\mod 3 = 2$, and weight $k$ to vertex $x$ if $x\mod 3 = 0$.  The \emph{total weight of a configuration} is the product of the weights of the vertices containing pegs when written from smallest to largest vertex. For example, if on $P_5$ we have pegs on vertices $1$, $3$, $4$, and $5$, then the total weight of that configuration is $ikij = -i$.

Note that: (a) moves preserve the total weight of a configuration, (b) we can assume, by the $P_4$-move, an initial (final, resp.) configuration has a hole (peg, resp.) on vertex $1$, $2$, or $3$, and (c) the total weight of a configuration with a single peg is either $i$ (if the peg is on vertex $1$), $j$ (if the peg is on vertex $2$), or $k$ (if the peg is on vertex $3$).

We use these observations to fully classify paths $P_n$ where $n \geq 3$. We have the following six cases. 
\begin{enumerate}
\item If $n=6\ell$, then an initial hole on $1$, $2$, or $3$ gives an initial configuration weight of $-i$, $j$, or $-k$, respectively.  Therefore the hole must start on $2$ and the final peg must end on $2$.
\item If $n=6\ell+1$, then an initial hole on $1$, $2$, or $3$ gives an initial configuration weight of $1$, $-k$, or $-j$, respectively.  Therefore these paths are not solvable.
\item If $n=6\ell+2$, then an initial hole on $1$, $2$, or $3$ gives an initial configuration weight of $j$, $i$, or $1$, respectively.  Therefore the hole must start on $1$ or $2$ and the final peg must end on $2$ or $1$, respectively.
\item If $n=6\ell+3$, then an initial hole on $1$, $2$, or $3$ gives an initial configuration weight of $i$, $-j$, or $k$, respectively.  Therefore the hole must start on $1$ or $3$ and and the final peg must end on $1$ or $3$, respectively.
\item If $n=6\ell+4$, then an initial hole on $1$, $2$, or $3$ gives an initial configuration weight of $-1$, $k$, or $j$, respectively.  Therefore the hole must start on $2$ or $3$ and and the final peg must end on $3$ or $2$, respectively.
\item If $n=6\ell+5$, then an initial hole on $1$, $2$, or $3$ gives an initial configuration weight of $-j$, $-i$, or $-1$, respectively.  Therefore these paths are not solvable.
\end{enumerate}
Since these six cases give the total weight of any possible initial hole or final peg configuration (and the $P_4$--Move allows an initial hole or final peg to be shifted by distance $3$), this fully classifies the possible starting hole and final peg positions for $P_n$.

What about $C_n$ for $n \geq 3$? We know that the cycle $C_n$ is freely solvable unless $n=6k+1$ or $n=6k+5$.  We use a similar weighting scheme to show the remaining cycles are not solvable and to classify the cycles that are doubly freely solvable.

Fix one cyclic orientation of the vertices of $C_n$; label them $\{1,2,\ldots,n\}$.  We then consider $C_{3n}$ with vertices $\{1,2,\ldots 3n\}$ such that each move on $C_n$ corresponds to \emph{three} moves on $C_{3n}$, where the three moves are equivalent copies of the move on those vertices with labels that differ by $n$.  For example, if $n=5$ and a move jumps a peg on vertex $2$ over a peg on vertex $1$ into a hole on vertex $5$, then in $C_{15}$ the pegs on vertices $2$, $7$, and $12$ jump over the pegs on vertices $1$, $6$, and $11$ into holes on vertices $15$, $5$, and $10$.

As before, define the \emph{total weight of a configuration} to be the product of the weights of the vertices containing pegs when written from smallest to largest vertex. Notice that the sets of moves made on $C_{3n}$ preserve the total weight (here, the fact that $ijk = kij = jki$ and $jik = kji = kij$ is essential).

Suppose that $n=6\ell+1$.  Then an initial configuration in $C_{3n}$ (corresponding to an initial configuration in $C_{n}$ with a single hole) has total weight $1$ and a final configuration in $C_{3n}$ (corresponding to a final configuration in $C_n$ with a single peg) has pegs in $C_{3n}$ on $x$, $x+n$, and $x+2n$ for some $x \in \{1,2,\ldots,n\}$.  But this means a final configuration in $C_{3n}$ that corresponds to a single peg in $C_n$ has total weight $-1$.  If $n=6\ell+5$, then a similar analysis shows that an initial configuration in $C_{3n}$ (corresponding to an initial configuration in $C_{n}$ with a single hole) has total weight $-1$ while a final configuration in $C_{3n}$ (corresponding to a final configuration in $C_{n}$ with a single peg) has total weight $1$.  Therefore $C_n$, where $n=6\ell+1$ or $6\ell+5$, is not solvable.


Similar arguments show that if a hole starts on vertex $x$ in $C_{6\ell}$ and $C_{6\ell+3}$, then the final peg must be on vertex $x + 3q$ for some integer $q$.  In $C_{6\ell+2}$ and $C_{6\ell+4}$, the initial hole and final peg can be anywhere (using the $P_4$--Move).
%
%
%
\end{proof}



\end{document}